\theoremstyle{plain}
\DeclareFontFamily{U}{mathx}{\hyphenchar\font45}
\DeclareFontShape{U}{mathx}{m}{n}{
      <5> <6> <7> <8> <9> <10>
      <10.95> <12> <14.4> <17.28> <20.74> <24.88>
      mathx10
      }{}
\DeclareSymbolFont{mathx}{U}{mathx}{m}{n}
\DeclareMathAccent{\widecheck}{0}{mathx}{"71}
\DeclareMathAccent{\wideparen}{0}{mathx}{"75}
\newtheorem{theorem}{Theorem}[section]
\newtheorem*{theorema*}{Theorem A}
\newtheorem*{theoremb*}{Theorem B}
\newtheorem{lemma}[theorem]{Lemma}
\theoremstyle{remark}
\newtheorem{remark}[theorem]{Remark}
\newtheorem*{remark*}{Remark}
\theoremstyle{definition}
\numberwithin{equation}{section}
\newcommand{\vertiii}[1]{{\left\vert\kern-0.25ex\left\vert\kern-0.25ex\left\vert #1 
    \right\vert\kern-0.25ex\right\vert\kern-0.25ex\right\vert}}
\newcounter{smallromans}
\def\dens{\operatorname{dens}}
\def\qe{\mathbb Q}
\newcounter{smallromansdash}
\newcounter{bigromans} 
  {\end{list}}
\begin{document}
\date{July 9, 2017}
\title{Restricting uniformly open surjections}

\author[T.~Kania]{Tomasz Kania}
\author[M.~Rmoutil]{Martin Rmoutil}
\address{Mathematics Institute,
University of Warwick,
Gibbet Hill Rd, 
Coventry, CV4 7AL, 
England}
\email{tomasz.marcin.kania@gmail.com, m.rmoutil@warwick.ac.uk}

\subjclass[2010]{54E40, 03C98 (primary), and 46A30, 54E50, 54E15 (secondary)} 
\keywords{uniformly open map, Schauder's lemma, elementary submodels, uniform spaces}

\thanks{The authors acknowledge with thanks funding received from the European Research Council; ERC Grant Agreement No.~291497.}
\begin{abstract}We employ the theory of elementary submodels to improve a recent result by Aron, Jaramillo and Le Donne (\emph{Ann.~Acad.~Sci.~Fenn.~Math.}, to appear) concerning restricting uniformly open, continuous surjections to smaller subspaces where they remain surjective. To wit, suppose that $X$ and $Y$ are metric spaces and let $f\colon X\to Y$ be a~continuous surjection. If $X$ is complete and $f$ is uniformly open, then $X$ contains a~closed subspace $Z$ with the same density as $Y$ such that $f$ restricted to $Z$ is still uniformly open and surjective. Moreover, if $X$ is a Banach space, then $Z$ may be taken to be a closed linear subspace. A counterpart of this theorem for uniform spaces is also established. \end{abstract}
\maketitle

\section{Introduction}
Recently the problem of restricting surjective maps between metric spaces to \emph{smaller} subspaces where they remain being surjective attracted considerable attention due to a~renewed interest in possible abstract extensions of the Morse--Sard theorem. By the Axiom of Choice, every surjection $f\colon X\to Y$ admits a right inverse $g\colon Y\to X$, whence the range of $g$ is usually a smaller subspace of $X$ on which $f$ remains surjective. In the case where $X$ carries an extra structure, the range of a (highly non-constructively chosen) $g$ may be still quite large in a certain sense, though. Let us then make the problem more precise.\medskip

By a density of a topological space $Z$, we understand the smallest cardinality ${\rm dens}\, Z$ of a dense subset of $Z$. Suppose that $X$ and $Y$ are metric spaces and let $f\colon X\to Y$ be a~function. It is natural to ask under what circumstances should it be possible to find a subspace $Z$ of $X$ with ${\rm dens}\, Z = {\rm dens}\, Y$ such that $f$ restricted to $Z$ is still surjective as a function into $Y$. Aron, Jaramillo and Ransford (\cite{ajr}) proved that there exists a $C^\infty$-function from the non-separable Hilbert space $\ell_2(\mathbb{R}^2)$ onto $\mathbb{R}^2$, which fails to be surjective when restricted to any separable subset. On the positive side, Aron, Jaramillo and Le Donne (\cite[Theorem 1]{ajl}) proved that one may choose suitable $Z$ when the domain of $X$ is complete and $f$ is continuous and uniformly open. However it does not follow from their proof that the restriction may be taken uniformly open (or even open). (We state the definition of a uniformly open map in the subsequent section.)\medskip

\begin{theorema*}\label{T:GenAJL}
Suppose that $X$ and $Y$ are metric spaces and let $f\colon X\to Y$ be a continuous surjection. If $X$ is complete and $f$ is uniformly open, then $X$ contains a closed subspace $Z$ with ${\rm dens}\, Z = {\rm dens}\, Y$ such that $f$ restricted to $Z$ is uniformly open and surjective. Moreover, if $X$ is a Banach space, then $Z$ may be taken to be a closed linear subspace.  \end{theorema*}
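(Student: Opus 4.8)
The plan is to select $Z$ via the method of elementary submodels rather than by a direct transfinite recursion. Write $\kappa = \dens Y$ and fix a regular cardinal $\theta$ large enough that $X$, $Y$, $f$, the metrics $d_X, d_Y$, and a modulus $\delta$ of uniform openness all lie in $H(\theta)$; here $\delta\colon \qe^+ \to \er^+$ is a fixed function with $B(f(x), \delta(\eta)) \subseteq f(B(x, \eta))$ for every $x \in X$ and every rational $\eta > 0$. By the downward L\"owenheim--Skolem theorem I would take an elementary submodel $M \prec (H(\theta), \in)$ with $\{X, Y, f, d_X, d_Y, \delta\} \subseteq M$, with $\kappa \subseteq M$ and $|M| = \kappa$, and containing a fixed dense set $D \subseteq Y$ with $|D| = \kappa$. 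Since $D \in M$ and $|D| = \kappa \subseteq M$, we get $D \subseteq M$, so $M \cap Y \supseteq D$ is dense in $Y$. Define $Z := \overline{M \cap X}$ (closure in $X$); as $X$ is complete and $Z$ is closed, $Z$ is complete. The single reflection fact I need is this consequence of elementarity: for all $a \in M \cap X$, $b \in M \cap Y$ and $\eta \in \qe^+$, if $d_Y(f(a), b) < \delta(\eta)$ then there is $a' \in M \cap X$ with $d_X(a, a') < \eta$ and $f(a') = b$. Indeed the displayed implication holds in $H(\theta)$ by the choice of $\delta$, its parameters lie in $M$, and $\qe \subseteq M$, so the witness $a'$ may be found inside $M$.

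Density is then easy: $M \cap X$ is dense in $Z$ and $|M \cap X| \le |M| = \kappa$, so $\dens Z \le \kappa$; conversely, once $f|_Z$ is shown surjective, it is a continuous surjection of $Z$ onto $Y$, whence $\dens Y \le \dens Z$, giving $\dens Z = \kappa$. For surjectivity I would run the classical open--mapping iteration, taking care to keep every iterate inside $M \cap X$. Given $w \in Y$, choose $b_n \in M \cap Y$ with $b_n \to w$ and $d_Y(b_n, b_{n+1}) < \delta(2^{-n})$; start from some $a_0 \in M \cap X$ with $f(a_0) = b_0$ (which exists because $f$ is surjective and $b_0 \in M$, so a preimage may be found in $M$), and apply the reflection fact repeatedly to produce $a_{n+1} \in M \cap X$ with $d_X(a_n, a_{n+1}) < 2^{-n}$ and $f(a_{n+1}) = b_n$. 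The sequence $(a_n)$ is Cauchy, so converges to some $z \in \overline{M \cap X} = Z$, and continuity gives $f(z) = \lim b_n = w$.

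The real work is the uniform openness of $f|_Z$, and this is where I expect the delicate bookkeeping to lie. Fix $\varepsilon > 0$ and choose positive rationals $\varepsilon_n$ with $\sum_{n \ge 0} \varepsilon_n < \varepsilon/2$. I claim $\delta' := \tfrac13 \delta(\varepsilon_0)$ witnesses uniform openness at scale $\varepsilon$. Let $z \in Z$ and $w \in Y$ with $d_Y(f(z), w) < \delta'$. Using $z \in \overline{M \cap X}$ and continuity of $f$ at $z$, pick $a_0 \in M \cap X$ with $d_X(z, a_0) < \varepsilon/2$ and $d_Y(f(a_0), f(z)) < \delta'$, and pick $b_0 \in M \cap Y$ with $d_Y(b_0, w) < \delta'$, so that $d_Y(f(a_0), b_0) < 3\delta' = \delta(\varepsilon_0)$. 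Now repeat the iteration of the previous paragraph with targets $b_n \in M \cap Y$ tending to $w$ and satisfying $d_Y(b_n, b_{n+1}) < \delta(\varepsilon_{n+1})$, obtaining $a_{n+1} \in M \cap X$ with $d_X(a_n, a_{n+1}) < \varepsilon_n$ and $f(a_{n+1}) = b_n$. The limit $z' \in Z$ satisfies $f(z') = w$ and $d_X(z, z') \le d_X(z, a_0) + \sum_{n \ge 0} \varepsilon_n < \varepsilon/2 + \varepsilon/2 = \varepsilon$. Thus $B(f(z), \delta') \cap Y \subseteq f(B(z, \varepsilon) \cap Z)$ for every $z \in Z$, with $\delta'$ depending only on $\varepsilon$. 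The crux is precisely this uniformity: the $z$-dependent modulus of continuity of $f$ enters only through the freely chosen $a_0$, whose displacement is absorbed into the reserved budget $\varepsilon/2$, so it never pollutes $\delta'$.

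Finally, for the Banach-space refinement no extra construction is needed. Since the vector-space operations of $X$ and the scalar field belong to $H(\theta)$, they belong to $M$, so $M \cap X$ is closed under addition and under multiplication by scalars lying in $M$; as $\qe \subseteq M$, the set $M \cap X$ is a $\qe$-linear subspace. A routine approximation then shows that $Z = \overline{M \cap X}$ is closed under addition and under multiplication by arbitrary scalars: for a scalar $\lambda$, taking $q_n \in \qe$ with $q_n \to \lambda$ and $x_n \in M \cap X$ with $x_n \to z$ gives $q_n x_n \in M \cap X$ and $q_n x_n \to \lambda z$, whence $\lambda z \in Z$. Therefore $Z$ is a closed linear subspace, and the arguments above already show that it has the required density and that $f|_Z$ is a uniformly open surjection.
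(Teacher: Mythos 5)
Your proposal follows essentially the same route as the paper: choose a small elementary-submodel-type set $M$ of cardinality $\dens Y$ containing the relevant parameters and a set whose image is dense in $Y$, put $Z=\overline{M\cap X}$, and use the fact that uniform openness reflects into $M$, so that exact preimages of points of $M\cap Y$ with controlled displacement can be found inside $M\cap X$. The differences are mainly presentational. The paper gets its reflection from Theorem~\ref{T:elementary} applied to a single formula $\phi$ with $\varepsilon,\delta$ ranging over $\qe_+$, and then finishes abstractly: the reflected inclusion only gives $B(f(x),\delta)\subseteq\overline{f(B_Z(x,\varepsilon))}$ for $x$ in the dense set $X\cap M$, and Schauder's Lemma~\ref{L:Schauder} in the dense-set form of Remark~\ref{denseschauder}, plus Lemma~\ref{L:surjective}, do the rest. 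You instead take $M\prec H(\theta)$ with a fixed modulus function $\delta(\cdot)$ as a parameter and re-prove the Schauder machinery inline: your successive-approximation iteration with budget $\sum_n\varepsilon_n<\varepsilon/2$, and the initial jump from $z\in Z$ to $a_0\in M\cap X$ using continuity of $f$ at $z$, are exactly the content of the proof of Lemma~\ref{L:Schauder} combined with Remark~\ref{denseschauder}. This part of your argument is correct and self-contained; the paper's version is shorter because it quotes the lemma.

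There is one genuine error, though it is easily repaired: in the Banach-space part you assert that ``since the vector-space operations of $X$ and the scalar field belong to $H(\theta)$, they belong to $M$.'' That implication is false: $M$ has cardinality $\kappa$ and omits almost all of $H(\theta)$, so membership in $H(\theta)$ gives no membership in $M$. Worse, elementarity alone would at best give you \emph{some} operations in $M$ making $X$ a Banach space, not the actual ones, and you need the actual ones for $\overline{M\cap X}$ to be a linear subspace of $X$. The fix is precisely the paper's move: include the actual addition and the actual scalar multiplication (and $\qe$, or $\qe(i)$ in the complex case) among the parameters required to lie in $M$ when you apply L\"owenheim--Skolem; after that your closure argument ($M\cap X$ is a $\qe$-linear subspace, hence its closure is a closed linear subspace) goes through verbatim. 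Two smaller points: your separate surjectivity paragraph has an index slip ($f(a_0)=b_0$ together with $f(a_{n+1})=b_n$ forces $f(a_1)=f(a_0)$, and the bound $d_Y(b_n,b_{n+1})<\delta(2^{-n})$ is the one needed a step later), and in any case that paragraph is redundant: once $f|_Z$ is uniformly open and $f(Z)\supseteq D$ is dense, surjectivity follows from Lemma~\ref{L:surjective}, exactly as the paper concludes.
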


Theorem A indeed strengthens the conclusion of \cite[Theorem~1]{ajl} by uniform openness of $f|_Z$ as even restrictions of uniformly open bounded bilinear maps on Banach spaces to closed subspaces need not be uniformly open. For example, the restriction of multiplication in the space of continuous functions on the Cantor set, which is uniformly open, to certain closed subalgebras is no longer so (see \cite[Section~3]{dk}).\smallskip

When this work was at the stage of completion, Le Donne has kindly communicated to us the claim that in joint work with Jaramillo they were able to relax the hypothesis of uniform openness of a~continuous surjection to mere openness by reducing the proof to the previously established uniformly open case. Our result is of different nature though. We show that a uniformly open continuous surjection $f\colon X\to Y$ may be restricted to a~uniformly open map on a~subspace $Z$ of $X$ with ${\rm dens}\, Z = {\rm dens}\, Y$ in such a way that the range of $f$ contains a~dense subset of $Y$. Since the domain of $f$ is complete and $f$ is uniformly open, the range of $f$ must be complete too, so it must be the whole $Y$. Thus, we shift our efforts from focusing on surjectivity of the restriction to showing mere uniform openness which would automatically imply surjectivity.\smallskip

Our seemingly overcomplicated proof is based on the method of elementary submodels, a~part of model theory. It is fair to say that the proof itself could be modelled on the proof of Theorem~\ref{T:elementary}, and the machinery from logic could be possibly avoided, however we have good reasons not to do this. The advantage of our approach is the ease with which we may impose further requirements on $Z$, if needed. For example, if $X$ carries an extra structure that can be expressed in terms of first-order logic (\emph{e.g.}, if $X$ is a normed space or a normed algebra), $f$ can be restricted to a closed substructure (a closed subspace or a~closed subalgebra, respectively) and remain uniformly open. In this case, avoiding using elementary submodels could bring the danger of being quickly lost in the obscurity of notation and other technical difficulties. Secondly, the problem itself appears to be tailor-made for the use of elementary submodels. Let us then take our proof as an opportunity for advertising the powerful method of elementary submodels; given its relative simplicity, the reader interested more in elementary submodels themselves than in our result, may regard the proof as a tutorial of the method.

\section{Preliminaries}

A map $f\colon X\to Y$ between metric spaces is \emph{uniformly open} when it has the property that for each $\varepsilon>0$ there is $\delta>0$ such that for each $x\in X$ one has $$B(f(x),\delta)\subseteq f(B(x,\varepsilon)).$$

It seems that the notion of uniform openness was first distilled by Michael \cite{michael}, however it had been employed already by Schauder (\cite[p.~6]{schauder}) \emph{en route} to the proof of his open-mapping theorem for complete metric vector spaces.

Let us record Schauder's lemma in its modern form (see, \emph{e.g.}, \cite[Lemma 3.9]{mv} for an~elementary proof).

\begin{lemma}[Schauder]\label{L:Schauder}
Let $X$ and $Y$ be metric spaces such that $X$ is complete. Suppose that $f\colon X\to Y$ is a continuous map. If for each $\varepsilon>0$ there exists $\delta>0$ such that for any $x\in X$ one has $$B(f(x),\delta)\subseteq \overline{f(B(x,\varepsilon))},$$ then $f$ is uniformly open.\end{lemma}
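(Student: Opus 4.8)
The plan is to fix $\varepsilon>0$ and produce a single $\delta>0$, depending on $\varepsilon$ but not on the point $x$, witnessing the inclusion $B(f(x),\delta)\subseteq f(B(x,\varepsilon))$ required for uniform openness. The heart of the matter is to upgrade the hypothesis, which places $B(f(x),\delta)$ only inside the \emph{closure} $\overline{f(B(x,\varepsilon))}$, to genuine membership in the image itself. This is precisely where completeness of $X$ enters, through a successive-approximation argument in the spirit of the usual proof of the open-mapping theorem.

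Concretely, I would first fix a summable sequence of radii $(\varepsilon_n)_{n\geq 0}$ of positive reals with $\sum_{n}\varepsilon_n<\varepsilon$ (for instance $\varepsilon_n=\varepsilon/2^{\,n+2}$), and for each $n$ invoke the hypothesis to obtain $\delta_n>0$ such that $B(f(w),\delta_n)\subseteq\overline{f(B(w,\varepsilon_n))}$ for every $w\in X$. Passing to smaller values if necessary, I may assume the $\delta_n$ decrease to $0$, and I then set $\delta:=\delta_0$. The key point to record is that $\delta$ is manufactured from data ($\varepsilon_n$ and $\delta_n$) chosen once and for all, independently of any particular $x$.

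Now, given any $x\in X$ and any $y\in B(f(x),\delta_0)$, I would construct by induction a sequence $(x_n)$ with $x_0=x$ satisfying $d(x_{n+1},x_n)<\varepsilon_n$ and $d(f(x_{n+1}),y)<\delta_{n+1}$ for all $n$. At the inductive step one uses that $d(f(x_n),y)<\delta_n$ gives $y\in B(f(x_n),\delta_n)\subseteq\overline{f(B(x_n,\varepsilon_n))}$, so there exists $x_{n+1}\in B(x_n,\varepsilon_n)$ whose image lies within $\delta_{n+1}$ of $y$; the base case $n=0$ is exactly the assumption $y\in B(f(x),\delta_0)$. Since $\sum_n\varepsilon_n<\varepsilon$, the sequence $(x_n)$ is Cauchy, hence converges by completeness to some $z\in X$ with $d(z,x)\le\sum_n\varepsilon_n<\varepsilon$. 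Because $d(f(x_n),y)<\delta_n\to 0$ and $f$ is continuous, passing to the limit yields $f(z)=y$, so $y\in f(B(x,\varepsilon))$. As $x$ and $y\in B(f(x),\delta_0)$ were arbitrary, this establishes $B(f(x),\delta)\subseteq f(B(x,\varepsilon))$ for all $x$, which is the required uniform inclusion.

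The main obstacle, and essentially the only nonroutine ingredient, is arranging the two convergences simultaneously: the radii $\varepsilon_n$ must be summable so that $(x_n)$ is Cauchy and its limit stays within $B(x,\varepsilon)$, while the tolerances $\delta_n$ must tend to $0$ so that $f(x_n)\to y$ and continuity can be invoked. Once these are chosen uniformly in $x$ — which is possible precisely because the hypothesis supplies each $\delta_n$ uniformly over all $w\in X$ — the single number $\delta_0$ serves for every $x$, and uniform openness follows.
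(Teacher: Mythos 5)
Your proof is correct. The paper does not prove this lemma itself---it cites \cite[Lemma~3.9]{mv} for an elementary proof---and your successive-approximation argument (choosing summable radii $\varepsilon_n$ with uniform tolerances $\delta_n\to 0$, building a Cauchy sequence, and using completeness plus continuity to upgrade the closure inclusion to genuine membership) is exactly the standard proof of Schauder's lemma found in that reference.
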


\begin{remark}\label{denseschauder}We shall need a formally stronger version of Schauder's lemma requiring that the postulated inclusion holds only for $x$ in a fixed, dense subset $D\subset X$. This is however sufficient. Indeed, let $\varepsilon > 0$ and $\delta>0$ corresponding to $\varepsilon /2$ be given. Take $x\in X$ and $x^\prime\in D$ such that $d(x,x^\prime)<\varepsilon/2$ and $d\big(f(x),f(x^\prime)\big)<\delta/2$. We then have $$ B(f(x), \delta / 2) \subseteq B(f(x^\prime), \delta) \subseteq  \overline{f(B(x^\prime, \varepsilon/2))} \subseteq  \overline{f(B(x, \varepsilon))}.$$
\end{remark}

We trust that the following basic property of uniformly open maps requires no explanation.
\begin{lemma}\label{L:surjective}
Let $X$ and $Y$ be metric spaces. Suppose that $X$ is complete. If $f\colon X\to Y$ is a uniformly open map with dense range, then $f$ is surjective.
\end{lemma}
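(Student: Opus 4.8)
The plan is to reduce surjectivity to the defining inclusion of uniform openness evaluated at a \emph{single} scale, letting density supply the missing point. Fix an arbitrary $y\in Y$; the goal is to produce $x\in X$ with $f(x)=y$. First I would apply uniform openness with, say, $\varepsilon=1$ to obtain a \emph{uniform} $\delta>0$ such that $B(f(x),\delta)\subseteq f(B(x,1))$ holds for \emph{every} $x\in X$. Since $f$ has dense range, I can then choose $x_0\in X$ with $d(f(x_0),y)<\delta$, that is, $y\in B(f(x_0),\delta)$. Applying the inclusion at the point $x_0$ yields $y\in B(f(x_0),\delta)\subseteq f(B(x_0,1))\subseteq f(X)$, so $y$ lies in the range of $f$. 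As $y$ was arbitrary, $f$ is surjective.

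There is essentially no obstacle here beyond recognising how the two hypotheses interact, which is presumably why the statement is declared to need no explanation. The one point worth isolating is that \emph{uniformity} of $\delta$ is exactly what does the work. An ordinary open map, for which $f(B(x,\varepsilon))$ merely contains \emph{some} ball about $f(x)$, can have dense but non-surjective range, because the admissible radii may shrink to $0$ (a range such as $\er\setminus\{0\}$ is open and dense yet not all of $\er$). Uniform openness rules this out by producing one fixed radius $\delta$ that works simultaneously around every image point: the $\delta$-neighbourhood of $f(X)$ is then contained in $f(X)$ itself, and since $f(X)$ is dense this neighbourhood exhausts $Y$, whence $f(X)=Y$.

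Finally, I would note that neither completeness of $X$ nor continuity of $f$ is actually invoked in this argument; the full strength of uniform openness at a single scale already forces the range to absorb its own closure. Completeness (together with continuity) is instead precisely what is needed to upgrade the weaker, closure-based hypothesis of Schauder's Lemma~\ref{L:Schauder} to genuine uniform openness. Once uniform openness is available, the present lemma follows as above.
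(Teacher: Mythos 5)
Your proof is correct, and it is precisely the one-line argument the paper leaves implicit when it declares that this lemma ``requires no explanation'': a single uniform scale $\delta$ (say the one for $\varepsilon=1$) together with density of the range immediately produces a preimage for any $y\in Y$. Your side observation is also accurate --- completeness of $X$ is not actually needed for this lemma, only for Schauder's Lemma~\ref{L:Schauder}, which is where the paper genuinely uses it.
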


Our method of proof uses elementary submodels; let us therefore describe some of the basic notions and facts we use. By \emph{formula} we shall always mean a formula in the language of ZFC. Let $N$ be a set and $\phi$ be a formula. The \emph{relativisation of $\phi$ to $N$} is the formula $\phi^N$, which is obtained form $\phi$ by replacing each quantifier of the form $\forall x$ by $\forall x\in N$ and each quantifier of the form $\exists x$ by $\exists x\in N$. Let $\varphi(x_1,\ldots, x_n)$ be a formula, where $x_1,\ldots, x_n$ are all the free variables of $\phi$. We say that \emph{$\phi$ is absolute for $N$} if 
\[ \forall a_1,\ldots a_n \in N \colon (\phi^N (a_1,\ldots,a_n)\leftrightarrow \phi(a_1,\ldots, a_n)).\]
We shall employ the following theorem (see, \emph{e.g.}, \cite[Chapter~IV, Theorem~7.8]{kunen}).
\begin{theorem}\label{T:elementary}
Let $\phi_1,\ldots,\phi_n$ be formulae and let $A$ be a set. Then there exists a set $M\supseteq A$ such that $\phi_1,\ldots,\phi_n$ are absolute for $M$ and $|M|\leqslant \max(\omega, |A|)$.
\end{theorem}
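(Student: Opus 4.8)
The plan is to construct $M$ as the closure of $A$ under a finite family of Skolem functions that extract witnesses for the existential subformulas occurring in $\phi_1,\ldots,\phi_n$, and then to establish absoluteness by induction on the complexity of formulas, the crucial step being handled by the Tarski--Vaught criterion. First I would pass from the given list to the finite set $\Psi$ of all subformulas of $\phi_1,\ldots,\phi_n$, closed under taking subformulas; since every $\phi_i$ lies in $\Psi$, it suffices to arrange that each member of $\Psi$ is absolute for $M$. For each $\psi\in\Psi$ of the form $\exists y\,\theta(y,x_1,\ldots,x_k)$, I would invoke the Axiom of Choice to fix a function $H_\psi\colon V^k\to V$ such that $\theta\big(H_\psi(a_1,\ldots,a_k),a_1,\ldots,a_k\big)$ holds whenever some witness $y$ exists, and $H_\psi$ takes a fixed default value otherwise. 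There are only finitely many such functions, each of finite arity.

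Next I would define $M=\bigcup_{j<\omega}A_j$, where $A_0=A$ (enlarged by one fixed element to guarantee $M\neq\emptyset$) and $A_{j+1}$ arises from $A_j$ by adjoining all values $H_\psi(\vec{a})$ as $\psi$ ranges over the existential formulas in $\Psi$ and $\vec{a}$ ranges over tuples from $A_j$ of the appropriate length. By construction $A\subseteq M$ and $M$ is closed under every $H_\psi$. A routine cardinal computation then yields the size bound: if $A$ is finite then each $A_j$ is finite (finitely many functions applied to finitely many tuples), so the countable union $M$ satisfies $|M|\leqslant\omega$; if $A$ is infinite then $|A_{j+1}|=|A_j|=|A|$ by induction, whence $|M|\leqslant|A|$. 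In either case $|M|\leqslant\max(\omega,|A|)$.

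Finally I would prove that every $\psi\in\Psi$ is absolute for $M$ by induction on its structure. Atomic formulas are absolute because relativisation alters neither $\in$ nor $=$, and the Boolean connectives $\neg$ and $\wedge$ preserve absoluteness immediately. The only step that uses the construction is the existential quantifier: for $\psi=\exists y\,\theta$ with $\theta$ already absolute and a tuple $\vec{a}\in M$, the direction $\psi^M(\vec{a})\to\psi(\vec{a})$ is immediate, since a witness in $M$ is in particular a witness in $V$ and absoluteness of $\theta$ transfers the satisfaction. For the converse $\psi(\vec{a})\to\psi^M(\vec{a})$ I would feed $\vec{a}$ into $H_\psi$, obtaining the witness $H_\psi(\vec{a})\in M$ by closure of $M$, and then apply the inductive absoluteness of $\theta$ to conclude $\theta^M\big(H_\psi(\vec{a}),\vec{a}\big)$; this is precisely the Tarski--Vaught test. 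I expect this existential step to be the sole genuine obstacle, with the atomic and Boolean cases and the cardinality estimate amounting to bookkeeping.
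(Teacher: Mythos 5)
There is a genuine gap, and it sits in the very first step of your construction. You ``invoke the Axiom of Choice to fix a function $H_\psi\colon V^k\to V$'' selecting witnesses, but $V^k$ is a proper class, and the Axiom of Choice in ZFC provides choice functions only on \emph{sets}. A witness-selecting class function would be an instance of global choice, which is not a theorem of ZFC (and, worse, the classes of ZFC are just definable classes, so you would need a \emph{formula} defining $H_\psi$ --- none is available in general). The obvious repair inside your framework also fails: replacing the single chosen witness by the definable set of all witnesses of minimal rank keeps everything within ZFC but destroys the cardinality bound, since that set can be arbitrarily large; for instance, closing under all minimal-rank witnesses of $\exists y\,(y\in a)$, where $a\in A$ is a set of $2^{\omega}$ elements of equal rank, forces $|M|\geqslant 2^{\omega}$ even when $|A|=\omega$. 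So one really must pick single witnesses, and that is exactly what cannot be done over a proper class.

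The proof the paper points to (Kunen, Chapter~IV, Theorem~7.8 --- the paper itself only cites it) circumvents this by a two-step argument, and your material survives as the second step. First comes the Reflection Theorem: for each existential subformula $\psi$ one considers the \emph{ordinal-valued} function sending a tuple $\vec{a}$ to the least $\beta$ such that a witness for $\psi(\,\cdot\,,\vec{a})$ exists in $V_\beta$ (this is definable and needs no choice, because the ordinals are well-ordered); closing off under these functions and taking a suitable limit yields an ordinal $\alpha$ with $A\subseteq V_\alpha$ such that all subformulas of $\phi_1,\ldots,\phi_n$ are absolute between $V_\alpha$ and $V$. Only then does one run your Skolem-hull argument, but \emph{inside the set} $V_\alpha$: AC legitimately provides a well-ordering of $V_\alpha$, the Skolem functions ``least witness in $V_\alpha$ with respect to that well-ordering'' are honestly defined, your closure construction and cardinality computation go through verbatim, and your Tarski--Vaught induction shows each subformula is absolute between $M$ and $V_\alpha$. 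Composing the two absolutenesses ($M\leftrightarrow V_\alpha$ and $V_\alpha\leftrightarrow V$) gives the theorem. In short: your cardinal bookkeeping and the Tarski--Vaught step are correct, but the missing idea is the reflection step that replaces $V$ by a set \emph{before} any choices are made; without it the Skolem functions you need do not exist.
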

We refer the reader without background in logic to \cite[Chapter~24]{jw} for a leisurely exposition of elementary substructures and their applications outside set theory.
\section{Proof of Theorem~\ref{T:GenAJL}}
\begin{proof}[Proof of Theorem~\ref{T:GenAJL}]
It suffices to find a closed subset $Z\subset X$ with the same density as $Y$ so that $f|_Z$ is uniformly open and its range is dense in $Y$; indeed the surjectivity is then automatic by Lemma~\ref{L:surjective}. \smallskip

We may suppose that $Y$ is infinite as otherwise the statement is trivial. Take a set $D\subseteq X$ such that $f(D)$ is dense in $Y$ and  $|D|=\dens{Y}$, and set 
\[A=D\cup f(D) \cup \qe \cup\{f, X, d_X, Y, d_Y, <\},\] 
where $d_X$ and $d_Y$ are the metrics on $X$ and $Y$, respectively, and $<$ is the order relation in the real line (we consider the relation $<$ as a set, which we include into $A$ as an element, not a subset; similarly for $f, X, d_X, Y, d_Y$). Then $A$ has the same cardinality as $D$. Let $\phi$ be the following formula:
\[\forall \varepsilon \in \qe_+ \; \exists \delta\in \qe_+ \;\forall x\in X \; \forall y \;\exists z\in X \colon (d_Y(f(x),y)<\delta \rightarrow d_X(x,z)<\varepsilon \;\&\; y=f(z)).\]
Since $\phi$ is equivalent to $f$ being uniformly open, $\phi$ is true by the hypothesis of the theorem. By Theorem~\ref{T:elementary} we may find a set $M\supseteq A$ such that $\phi$ is absolute for $M$ and $|M|=\dens{Y}$. Set $Z=\overline{X\cap M}$; we \emph{claim} that $Z$ has the desired properties.\smallskip

The validity of $\phi$ and its absoluteness for $M$ imply that the following formula holds as well (note that $\qe_+\subseteq M$ and that all free variables appearing in this formula are also elements of $M$):
\begin{equation*}
\begin{aligned}
\forall \varepsilon \in \qe_+ \; \exists \delta\in \qe_+ \;\forall x\in X\cap M \; &\forall y\in M \;\exists z\in X\cap M \colon \\ &(d_Y(f(x),y)<\delta \rightarrow d_X(x,z)<\varepsilon \;\&\; y=f(z)).
\end{aligned}
\end{equation*}
This translates as follows---given $\varepsilon\in \qe_+$, there is $\delta\in \qe_+$ such that for each $x\in X\cap M$ we have 
\[ B(f(x),\delta)\cap M\subseteq f(B(x,\varepsilon)\cap M)\cap M,\]
whence
\[ B(f(x),\delta) \subseteq \overline{B(f(x),\delta)\cap M} \subseteq \overline{f(B(x,\varepsilon)\cap M)\cap M}\subseteq \overline{f(B_Z(x,\varepsilon))}.\]
Here the first inclusion follows from the fact that $f(D)\subseteq A\subseteq M$, so $M\cap Y$ is dense in $Y$; by $B_Z(x,\varepsilon)$ we mean simply $B(x,\varepsilon)\cap Z$, which makes the last inclusion trivial. Finally, by Lemma~\ref{L:Schauder} (which can be applied due to the continuity of $f$; see also Remark~\ref{denseschauder}), $f|_Z$ is uniformly open. As $f(D)$ is dense in $Y$, we are in a position to apply Lemma~\ref{L:surjective}, which concludes the proof.\smallskip

If $X$ is a Banach space we may have enlarged $A$ by the operation of addition in $X$ as well as by the operation $(\lambda, x)\mapsto \lambda x$, where $\lambda$ is a scalar and $x\in X$. In this case we may consider the formula $\psi$:
$$\forall x,y\in X\; \forall \lambda_1, \lambda_2\in \mathbb{Q} \; \exists z\in X \colon (\lambda_1 x + \lambda_2 y =z)$$
(or $\forall \lambda_1, \lambda_2\in \mathbb{Q}(i)$ in the case of complex scalars; we would then have included $\mathbb{Q}(i)$ in $A$ too). Theorem~\ref{T:elementary} applied to formulae $\varphi$, $\psi$ and the set $A$ yields a set $M$ for which $\varphi$, $\psi$ are absolute and so $\overline{M\cap X}$ is a~closed linear subspace of $X$ with the desired property.\end{proof}

\section{An extension to uniform spaces}
Generality of the employed method allows us to extend the result to uniformly open maps acting between uniform spaces. We refer the reader to James' book \cite{james} for the unexplained terminology concerning uniform spaces. \smallskip

Let $X$ and $Y$ be uniform spaces. A function $f\colon X\to Y$ is \emph{uniformly open} if for every entourage $\mathscr{D}$ of $X$ there is an entourage $\mathscr{E}$ of $Y$ such that $\mathscr{E}[f(x)] \subseteq f(\mathscr{D}[x])$ for each $x\in X$. Since every uniform space carries the canonical topological structure, we may talk about the density of a uniform space and continuity of maps defined therebetween.\smallskip

A~uniform space $X$ is \emph{super-complete} if the hyperspace $\exp X$ comprising all compact subsets of $X$ is complete when endowed with the Hausdorff uniformity; complete metric spaces with their natural uniformity are super-complete. Dektrajev (\cite{dektrajev}) proved that a~map $f\colon X\to Y$ between uniform spaces that has closed range is uniformly open as long as $X$ is super-complete and for every entourage $\mathscr{D}$ of $X$ there is an entourage $\mathscr{E}$ of $Y$ such that $\mathscr{E}[f(x)] \subseteq \overline{f(\mathscr{D}[x])}$ for every $x\in X$. Also, if $f\colon X\to Y$ is a uniformly open surjection, where $X$ is a complete uniform space, then $Y$ is complete too. Having prepared all the ingredients, by a completely analogous procedure, one may prove the following counterpart of Theorem A.

\begin{theoremb*}
Suppose that $X$ and $Y$ are uniform spaces and let $f\colon X\to Y$ be a continuous surjection. If $X$ is super-complete and $f$ is uniformly open, then $X$ contains a~closed subspace $Z$ with ${\rm dens}\, Z = {\rm dens}\, Y$ such that $f$ restricted to $Z$ is uniformly open and surjective. \end{theoremb*}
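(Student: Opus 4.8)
The plan is to transcribe the proof of Theorem~A almost verbatim, replacing $\varepsilon$-balls by entourages, completeness by super-completeness, Schauder's Lemma~\ref{L:Schauder} by Dektrajev's theorem (\cite{dektrajev}), and the surjectivity Lemma~\ref{L:surjective} by the quoted fact that a uniformly open surjection out of a complete uniform space has complete range. As before it suffices to produce a closed $Z\subseteq X$ with $\dens Z=\dens Y$ for which $f|_Z$ is uniformly open with dense range. First I would fix $D\subseteq X$ with $f(D)$ dense in $Y$ and $|D|=\dens Y$, together with bases $\mathscr{B}_X$, $\mathscr{B}_Y$ of the uniformities of $X$ and $Y$ consisting of open symmetric entourages, and set
\[A=D\cup f(D)\cup\qe\cup\mathscr{B}_X\cup\mathscr{B}_Y\cup\{f,X,Y,\mathscr{B}_X,\mathscr{B}_Y,<\},\]
taking for $\phi$ the first-order rendering of uniform openness in which the scales $\delta,\varepsilon$ are replaced by entourages:
\[\forall \mathscr{D}\in\mathscr{B}_X\;\exists \mathscr{E}\in\mathscr{B}_Y\;\forall x\in X\;\forall y\;\exists z\in X\colon \big((f(x),y)\in\mathscr{E}\rightarrow (x,z)\in\mathscr{D}\;\&\;y=f(z)\big).\]
Since it suffices to verify openness on a base of $\mathscr{U}_X$, this $\phi$ is equivalent to $f$ being uniformly open, hence true.

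Applying Theorem~\ref{T:elementary} to $\phi$ and $A$ yields $M\supseteq A$ with $\phi$ absolute for $M$, and I set $Z=\overline{X\cap M}$. Because $\mathscr{B}_X,\mathscr{B}_Y\subseteq M$, relativising $\phi$ gives, for each $\mathscr{D}\in\mathscr{B}_X$, an $\mathscr{E}\in\mathscr{B}_Y$ with $\mathscr{E}[f(x)]\cap M\subseteq f(\mathscr{D}[x]\cap M)$ for every $x\in X\cap M$. Since $f(D)\subseteq M$, the set $M\cap Y$ is dense in $Y$, and as $\mathscr{E}$ is open the set $\mathscr{E}[f(x)]\cap M$ is dense in the open set $\mathscr{E}[f(x)]$; passing to closures exactly as in the metric proof yields
\[\mathscr{E}[f(x)]\subseteq\overline{\mathscr{E}[f(x)]\cap M}\subseteq\overline{f(\mathscr{D}[x]\cap M)}\subseteq\overline{f\big(\mathscr{D}[x]\cap Z\big)}\qquad(x\in X\cap M).\]
As $\{\mathscr{D}\cap(Z\times Z):\mathscr{D}\in\mathscr{B}_X\}$ is a base for the subspace uniformity of $Z$ and $X\cap M$ is dense in $Z$, this is (by the dense-subset version of Dektrajev's criterion, the analogue of Remark~\ref{denseschauder}) exactly the closure hypothesis Dektrajev's theorem needs for $f|_Z$, modulo the two points below.

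The first point needing care is that Dektrajev's theorem, unlike Schauder's Lemma, requires the map to have closed range. I would circumvent this by applying it to the corestriction $g:=f|_Z\colon Z\to f(Z)$, whose range is all of $f(Z)$ and hence trivially closed there; the displayed inclusion restricts to $f(Z)$ because the closure of $f(\mathscr{D}[x]\cap Z)$ in $f(Z)$ is its closure in $Y$ intersected with $f(Z)$. Granting that $Z$ is super-complete, Dektrajev's theorem then makes $g$ uniformly open, whereupon the quoted completeness-transfer fact forces $f(Z)$ to be complete, hence closed in $Y$; since $f(Z)\supseteq f(D)$ is dense we conclude $f(Z)=Y$, so that $f|_Z=g$ is a uniformly open surjection onto $Y$, and $\dens Z=\dens Y$ follows as in Theorem~A. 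The super-completeness of $Z$ is a separate small lemma: the compact subsets of the closed set $Z$ form a closed, hence complete, subspace of the complete hyperspace $\exp X$, so $\exp Z$ is complete and $Z$ is super-complete.

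The genuine obstacle—the only place where the argument really departs from the metric case—is the demand that $\mathscr{B}_X$ and $\mathscr{B}_Y$ lie in $A$ while $|A|=\dens Y$, which is what makes $|M|=\dens Y$ and hence $\dens Z=\dens Y$. In the metric proof this is free, since a metric uniformity is countably generated and all the scales are already present through $\qe\subseteq M$; for a general uniformity, whose uniform weight may exceed its density, one must either assume the uniform weights of $X$ and $Y$ are at most $\dens Y$, or argue that a sub-base of entourages of cardinality $\dens Y$ already witnesses both $\phi$ and a base of the subspace uniformity of $Z$. I expect this cardinality bookkeeping, rather than any of the topological steps, to be the crux. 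Finally, should $X$ carry additional first-order structure (linear or algebraic), one appends the corresponding operations to $A$ and their defining axioms to the list of absolute formulae, exactly as in the Banach-space clause of Theorem~A, to obtain a closed substructure $Z$.
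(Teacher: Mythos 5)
Your plan follows exactly the route the paper intends: the paper proves Theorem~B only by declaring the procedure ``completely analogous'' to that of Theorem~A, with Dektrajev's theorem in place of Schauder's Lemma~\ref{L:Schauder} and the completeness-transfer fact in place of Lemma~\ref{L:surjective}. Two details you add are correct and genuinely needed to make that sketch honest: Dektrajev's theorem must be applied to the corestriction $f|_Z\colon Z\to f(Z)$ (applying it to $f|_Z\colon Z\to Y$ would presuppose closedness of the range, which is what one is trying to prove), and $Z$ is super-complete because $\exp Z$ is a closed, hence complete, subspace of $\exp X$.

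However, what you postpone to your final paragraph as ``cardinality bookkeeping'' is a genuine gap, not a formality, and as written your argument does not prove the stated theorem. Since $A\supseteq\mathscr{B}_X\cup\mathscr{B}_Y$, Theorem~\ref{T:elementary} only yields $|M|\leqslant\max(\omega,|A|)$ with $|A|$ at least the uniform weight of $X$, and uniform weight can strictly exceed density. Concretely, let $\kappa=2^{\mathfrak{c}}$, let $X=\{0,1\}^{\kappa}$ carry the product uniformity (compact, hence super-complete; its uniform weight is $\kappa$, and it is not separable since separability of $\{0,1\}^{\kappa}$ forces $\kappa\leqslant\mathfrak{c}$), let $Y=\{0,1\}^{\omega}$, and let $f$ be the projection onto the first $\omega$ coordinates, which is a uniformly open continuous surjection. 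Here $\dens Y=\omega$, but any base $\mathscr{B}_X$ has cardinality $\kappa$, so $|M|\geqslant\kappa$ and the bound $\dens Z\leqslant |M|$ becomes useless; indeed, with $\mathscr{B}_X\subseteq M$ one ends up with $\kappa\subseteq M$, hence all finite-support points in $M$, hence $Z=X$, whose density exceeds $\dens Y$. Thus your proof establishes Theorem~B only under the extra hypothesis that the uniform weights of $X$ and $Y$ are at most $\dens Y$, which this example violates.

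Nor can the gap be waved away by the natural repair, namely putting $\mathscr{U}_X$ and $\mathscr{U}_Y$ into $A$ as \emph{elements}, exactly as the metric proof treats $d_X$ and $d_Y$: relativisation then yields the key inclusion only for $\mathscr{D}\in\mathscr{U}_X\cap M$, whereas uniform openness of $f|_Z$ must be verified on \emph{every} trace $\mathscr{D}\cap(Z\times Z)$ with $\mathscr{D}\in\mathscr{U}_X$, and the traces of $M$-entourages need not be cofinal among these. In the same example with $M$ countable, pick any coordinate $\gamma\notin M$. Every entourage $\mathscr{D}\in\mathscr{U}_X\cap M$ contains a basic entourage $\mathscr{D}_F=\{(x,y)\colon x|_F=y|_F\}$ whose finite index set $F$ may be taken in $M$ (include the pertinent formula among those reflected), so $F\subseteq M$ and $\gamma\notin F$; then the pair $(\chi_{\kappa\setminus F},0)$ consists of two points of $X\cap M\subseteq Z$ which lie in $\mathscr{D}_F$ but differ at $\gamma$. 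Hence no $M$-entourage has trace on $Z$ contained in the trace of $\{(x,y)\colon x_{\gamma}=y_{\gamma}\}$, and the relativised formula simply cannot see that entourage of $Z$. (In the metric case this issue is invisible precisely because the uniformity has the countable, definable base of scales indexed by $\qe_+\subseteq M$; this is the exact point where ``completely analogous'' is not analogous.) Your second suggested fix also cannot work as stated, because the subspace uniformity of $Z$ need not have any base of cardinality $\dens Y$ at all; uniform openness is a $\forall\exists$ statement and does not require a small base, but then one needs structural information about $Z=\overline{X\cap M}$ going beyond the relativised formula --- in the example above $f|_Z$ \emph{is} uniformly open, yet certifying this uses the definability structure of $X\cap M$, not the formula. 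So you have located the crux correctly, but it remains unresolved, and with it the theorem in its stated generality.
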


\subsection*{Acknowledgements} We are most grateful to Richard Aron for having explained to us the main result of \cite{ajl} during his visit to Warwick in April 2017.

\end{document}